\newtheorem{theorem}{Theorem}[section]
\newtheorem{lemma}[theorem]{Lemma}
\newtheorem{proposition}[theorem]{Proposition}
\newenvironment{proof}[1][Proof]{\begin{trivlist}
\item[\hskip \labelsep {\bfseries #1}]}{\end{trivlist}}
\numberwithin{equation}{section}
\newenvironment{acknowledgement}[1][Acknowledgement
]{\begin{trivlist} \item[\hskip \labelsep {\bfseries
#1}]}{\end{trivlist}}
\newenvironment{keywords}[1][keywords]{\begin{trivlist} \item[\hskip \labelsep {\bfseries
#1}]}{\end{trivlist}}
\begin{document}
\title {Counting the Resonances in High and Even Dimensional Obstacle Scattering}
\author{Lung-Hui Chen$^1$}\maketitle\footnotetext[1]{Department of
Mathematics, National Chung Cheng University, 168 University Rd.,
Min-Hsiung, Chia-Yi County 621, Taiwan. Email:
lhchen@math.ccu.edu.tw; mr.lunghuichen@gmail.com. Fax:
886-5-2720497.  The author is supported by NSC Grant
97-2115-M194-010-MY2.}

\begin{abstract}
In this paper, we give a polynomial lower bound for the resonances
of $-\Delta$ perturbed by an obstacle in even-dimensional
Euclidean spaces, $n\geq4$. The proof is based on a Poisson
Summation Formula which comes from the Hadamard factorization
theorem in the open upper complex plane. We take advantage of the
singularity of regularized wave trace to give the pole/resonance
counting function over the principal branch of logarithmic plane a
lower bound.
\end{abstract}
\begin{keywords}
Weyl's density theorem/resonances counting/scattering resonances.
\end{keywords}

\section{Introduction}
Let $H$ be an embedded hypersurface in $\mathbb{R}^n$ such that
\begin{equation}
\mathbb{R}^n\setminus H=\Omega\cup\mathcal{O},\mbox{ with
}\overline{\mathcal{O}}\mbox{ compact and }\overline{\Omega}\mbox{
connected },n\geq4,
\end{equation}where both $\mathcal{O}$ and $\Omega$ are open. We
refer to $\overline{\mathcal{O}}$ as the obstacle and $\Omega$ as
the exterior.

\par
In the exterior domain $\Omega$, we consider the resonance theory of the
differential operator
\begin{equation}\label{1.1}
P:=-\Delta:L^2(\Omega)\rightarrow L^2(\Omega)
\end{equation}
satisfying the boundary condition either
\begin{equation}
u|_H=0
\end{equation}
or
\begin{equation}
(\frac{\partial}{\partial
n}+\gamma)u|_H=0,\gamma\in\mathcal{C}^\infty(H).
\end{equation}
Let us denote
\begin{equation}
\Lambda_{\frac{1}{2}}:=\{\lambda\neq0|0<\arg\lambda<\pi\}.
\end{equation}
It is well-known from spectral analysis that the resolvent
operator $(P-\lambda^2)^{-1}:L^2(\Omega)\rightarrow H^2(\Omega)$
is bounded in $\Lambda_{\frac{1}{2}}$ except a finite discrete set $\{\mu_1,\cdots,\mu_m\}$ such that
$\{\mu_1^2,\cdots,\mu_m^2\}$ are the eigenvalues of $P$. As a special case of the
black box formalism in Zworski and Sj\"{o}strand \cite{Sjostrand
and Zworski1991}, $(P-\lambda^2)^{-1}$ can be meromorphically
extended from $\Lambda_{\frac{1}{2}}$ to
$\Lambda:=\{\lambda\neq0|-\infty<\arg\lambda<\infty\}$, the
logarithmic plane, as an operator
\begin{equation}
R(\lambda):=(P-\lambda^2)^{-1}:L^2_{\rm comp}(\Omega)\rightarrow
H^2_{\rm loc}(\Omega)
\end{equation}
with poles of finite rank. All such meromorphic poles in $\Lambda$
are called resolvent resonances in mathematical physics
literature. Classically, we have Lax-Phillips theory for the
meromorphic extension theory of $R(\lambda)$ and of the relative
scattering matrix $S(\lambda)$ induced by scatterer $\mathcal{O}$.
See Lax and Phillips \cite{Lax,Lax2}. For a more complicated
boundary setting, we refer to Shenk and Thoe \cite{Shenk}.

\par
A question for the meromorphically extended $R(\lambda)$ would be: how many
resonances approximately inside a disc of radius $r$ in the
principal sheet of $\Lambda$? Let us
array all of the resonances as a sequence
$\{\lambda_1,\lambda_2,\cdots\}$ repeated according to their
multiplicity. Let us define
\begin{equation}
\Lambda_1:=\{\lambda\neq0|-\frac{\pi}{2}\leq\arg\lambda<\frac{3\pi}{2}\},
\end{equation}
where we set the branch cut to be negative
imaginary axis. In this paper, we study the resonance counting
function
\begin{equation}\label{1.10}
N_1(r):=\sharp\{\lambda\in\Lambda_1|\mbox{ resonances of }
R(\lambda), 0<|\lambda|\leq r\},\hspace{2pt}r>1.
\end{equation}
\par
From the literature, we define
\begin{equation}
N(r,a):=\sharp\{\lambda\in\Lambda|\mbox{ resonances of }
R(\lambda), 0<|\lambda|\leq r,\hspace{2pt}|\arg\lambda|\leq
a\},\hspace{2pt}r,a>1.\label{counting}
\end{equation}
The optimal upper bound of this counting function in Vodev's
formalism is found in \cite{Vodev,Vodev2} from which we recall for
$n\geq2$,
\begin{equation}
N(r,a)\leq Ca(r^n+(\log a)^n),\mbox{ where }C\mbox{ is a
constant},\hspace{2pt}r,a>1.\label{119}
\end{equation}
For $a<\frac{\pi}{2}$, an upper bound of $N(r,a)$ also
follows from Sj\"{o}strand and Zworski \cite{Sjostrand and
Zworski1991}. That is
\begin{equation}\label{110}
N(r,a)=\mathcal{O}(r^n).
\end{equation}
Consequently,~(\ref{119}) and~(\ref{110}) give a polynomial upper
bound on the resonance counting function $N_1(r)$ in
$\Lambda_{1}$ for $n\geq2$.

\par
To count $N_1(r)$, we begin our analysis with the domain
\begin{equation}
\Lambda_{\pi/2}:=\{\lambda\neq0||\arg\lambda|<\pi/2,\hspace{2pt}|\arg\lambda-\pi|<\pi/2\}.
\end{equation}
Defining the scattering determinant
\begin{equation}\label{112}
s(\lambda):=\det S(\lambda),
\end{equation}
where $S(\lambda)$ is the scattering matrix in sense of Zworski
\cite{Zworski3}. The poles of $s(\lambda)$ coincide with
$S(\lambda)$'s. We recall the functional identities for even
dimensional scattering matrix in Shenk and Thoe
\cite[p.468]{Shenk}, using the same notation for scattering
matrix,
\begin{equation}
S(\lambda)^{-1}=S(\overline{\lambda})^\ast,\mbox{ when
}\lambda\mbox{ is not a pole of }S(\lambda)\mbox{ or
}S(\overline{\lambda})^\ast;\label{115}
\end{equation}
\begin{equation}
S(\overline{\lambda})^\ast=2I-S(e^{\pi i}\lambda),\mbox{ when
}\lambda\in\Lambda.\label{1.11}
\end{equation}
We define $m_\mu(R)$ to be the multiplicity of $S(\lambda)$ near
the pole $\lambda=\mu$. According to~(\ref{115}), we know
$\lambda=\overline{\mu}$ is a zero of $S(\lambda)$. There exist
finitely many eigenvalues, say, $\mu_1^2,\cdots,\mu_m^2$, repeated
according to their multiplicity, such that $\{\mu_j\}_{j=1}^m$
appear as the poles of $S(\lambda)$ in $\Lambda_{\frac{1}{2}}$.
The result~(\ref{110}) is sufficient to give the representation,
as a special case in Zworski \cite[(2.3)]{Zworski3},
\begin{equation}\label{1.15}
s(\lambda)=e^{g_{\pi/2}(\lambda)}\frac{P_{\pi/2}(-\lambda)}{P_{\pi/2}(\lambda)},\mbox{ where
}\lambda\in\Lambda_{\pi/2}\cap\{\Re\lambda>0\}.
\end{equation}
In general, $g_{\pi/2}$ is a symbol on $\mathbb{R}$ such that
\begin{equation}
|\partial_\lambda^kg_{\pi/2}(\lambda)|\leq
C_{k,\epsilon}(1+|\lambda|)^{n+\epsilon-k},\forall\epsilon>0,
\end{equation}
and there exists an $m_0$ such that
\begin{equation}\label{1.18}
P_{\pi/2}(\lambda):=\prod_{\{\mu\in\Lambda_{\pi/2}\setminus\mathbb{R}\}}E(\frac{\lambda}{\mu},m_0)^{m_\mu(R)}
\end{equation}
where
\begin{equation}\label{11.9}
E(z,p):=(1-z)\exp(1+\cdots+\frac{z^{p}}{p}).
\end{equation}
In this paper, we set
\begin{equation}
g_{\pi/2}(-\lambda)=-g_{\pi/2}(\lambda),\lambda>0.\label{111}
\end{equation}
We compare this construction to Zworski's in \cite[p.3]{Zworski3}.
Combining~(\ref{111}) with~(\ref{1.15}), the fact that
$S(\lambda)$ is unitary on $\lambda>0$ implies
\begin{equation}\label{1.2}
|s(\lambda)|=1,\forall\lambda\in0i+\mathbb{R}.
\end{equation}

\par
Now, we define
\begin{equation}
\Lambda_{\pi/2}^\ast:=\Lambda_{\pi/2}\cup e^{i\frac{\pi}{2}}\mathbb{R}.
\end{equation}
We describe the analytic behavior of $s(\lambda)$ on
$e^{i\frac{\pi}{2}}\mathbb{R}$ by the odd reflection defined
in~(\ref{111}) as follows.
\begin{lemma}
$s(\lambda)$, defined through~(\ref{111}), extends from
$\{\lambda||\arg\lambda|\leq\pi/2\}$ to $\mathbb{C}$ such that it
is analytic in the neighborhood along
$e^{i\frac{\pi}{2}}\mathbb{R}$ outside $\{\mu_1,\cdots,\mu_m\}$.
\end{lemma}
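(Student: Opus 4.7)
\emph{Plan.} The strategy is to show that the canonical-product factorization~(\ref{1.15}) admits a holomorphic continuation across the imaginary axis, with the only possible singularities being the isolated eigenvalues $\mu_j$.

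First I would verify that $P_{\pi/2}(\lambda)$ is entire. The upper bound~(\ref{119}) controls the resonance counting in $\Lambda_{\pi/2}\setminus\mathbb{R}$ by a polynomial of degree $n$ in $|\lambda|$, so the Weierstrass product in~(\ref{1.18}) converges uniformly on compacta once the genus $m_0$ is chosen sufficiently large. Consequently $P_{\pi/2}$ is entire with zero set exactly $\{\mu\in\Lambda_{\pi/2}\setminus\mathbb{R}\}$. Because $\Lambda_{\pi/2}$ excludes the imaginary axis by definition, neither $P_{\pi/2}(\lambda)$ nor $P_{\pi/2}(-\lambda)$ vanishes on $e^{i\pi/2}\mathbb{R}$, so the quotient $P_{\pi/2}(-\lambda)/P_{\pi/2}(\lambda)$ is already holomorphic and non-vanishing in a full tubular neighborhood of the imaginary axis.

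Next I would extend $g_{\pi/2}$ across $e^{i\pi/2}\mathbb{R}$. On the real axis the odd-reflection convention~(\ref{111}) prescribes $g_{\pi/2}(-\lambda)=-g_{\pi/2}(\lambda)$, and the symbol estimates permit holomorphic continuation into $\Lambda_{\pi/2}$ on either side. To connect the two half-plane pieces, I would invoke the functional identity~(\ref{1.11}): taking determinants and using unitarity~(\ref{1.2}) shows that $s(\lambda)$ admits a meromorphic continuation from $\{\Re\lambda>0\}$ to $\{\Re\lambda<0\}$ whose only poles on $e^{i\pi/2}\mathbb{R}$ come from the scattering poles $\mu_j$. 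Comparing this continuation with~(\ref{1.15}) on overlap regions approaching the imaginary axis pins down $g_{\pi/2}$ as the unique holomorphic function gluing the two half-plane definitions, and~(\ref{111}) combined with~(\ref{115}) guarantees single-valuedness of the glued exponent.

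Combining these two steps, the product $e^{g_{\pi/2}(\lambda)}P_{\pi/2}(-\lambda)/P_{\pi/2}(\lambda)$ becomes meromorphic on all of $\mathbb{C}$ and holomorphic on a neighborhood of $e^{i\pi/2}\mathbb{R}\setminus\{\mu_1,\dots,\mu_m\}$, with the residual poles at the $\mu_j$'s inherited from the $L^2$-eigenvalues of $P$ on that axis. The main obstacle I anticipate is the compatibility check between the algebraic odd reflection~(\ref{111}) on $\mathbb{R}$ and the analytic continuation dictated by~(\ref{1.11}) across $e^{i\pi/2}\mathbb{R}$: one must rule out spurious phase jumps or singularities along the imaginary axis other than at $\{\mu_j\}$, which amounts to verifying that the two prescriptions agree on the limiting approach to $e^{i\pi/2}\mathbb{R}$ from both half-planes.
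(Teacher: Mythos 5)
Your first step (observing that $P_{\pi/2}$ is entire with no zeros on the imaginary axis, so the canonical-product ratio is already holomorphic there) is correct and matches the implicit reasoning of the paper, which therefore concentrates entirely on the exponential factor $e^{g_{\pi/2}}$. The problem is the mechanism you propose for gluing $g_{\pi/2}$ across $e^{i\pi/2}\mathbb{R}$. You invoke~(\ref{1.11}), namely $S(\overline{\lambda})^\ast=2I-S(e^{\pi i}\lambda)$, and say that ``taking determinants'' yields a meromorphic continuation of $s(\lambda)$ to the left half-plane. But $\det\bigl(2I-S(e^{\pi i}\lambda)\bigr)$ has no simple expression in terms of $\det S(e^{\pi i}\lambda)=s(e^{\pi i}\lambda)$; the map $A\mapsto\det(2I-A)$ is not multiplicative in $\det A$, so this identity does not transfer to the scalar determinant level and cannot be used the way you intend. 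Moreover, even granting some continuation of $s$ across the imaginary axis, you would still have to show that it coincides with the continuation produced by the \emph{odd-reflection convention}~(\ref{111}), which is exactly the ``main obstacle'' you flag at the end and then leave unverified. A plan that names the crux but does not close it is a genuine gap.

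The paper's proof takes a different and self-contained route that never touches~(\ref{1.11}). From unitarity of $S$ on $\lambda>0$ and the factorization~(\ref{1.15}) one gets that $g_{\pi/2}$ is purely imaginary on the positive reals; combining this with the odd reflection~(\ref{111}) yields, on the real axis, $g_{\pi/2}(-\lambda)=-g_{\pi/2}(\lambda)=\overline{g_{\pi/2}(\lambda)}=\overline{g_{\pi/2}(\overline{\lambda})}$. Since both $\lambda\mapsto g_{\pi/2}(-\lambda)$ and $\lambda\mapsto\overline{g_{\pi/2}(\overline{\lambda})}$ are holomorphic on $\Lambda_{\pi/2}$, the identity theorem upgrades this real-axis equality to the symmetry~(\ref{1.29}) $g_{\pi/2}(-\lambda)=\overline{g_{\pi/2}(\overline{\lambda})}$ on all of $\Lambda_{\pi/2}$. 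Passing to the boundary from either side of the imaginary axis (equations~(\ref{1.26})--(\ref{1.31})) shows the two limiting values agree, and the Schwarz reflection principle then gives analyticity of $s(\lambda)$ in a neighborhood of $e^{i\pi/2}\mathbb{R}$ away from $\{\mu_1,\dots,\mu_m\}$. This is the argument you would need to supply; replacing your appeal to~(\ref{1.11}) with the symmetry~(\ref{1.29}) and Schwarz reflection would repair the proposal.
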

\begin{proof}
Let $V$ be a neighborhood on the imaginary axis
$e^{i\frac{\pi}{2}}\mathbb{R}$ outside $\{\mu_1,\cdots,\mu_m\}$.
Let $V:=V^-\cup e^{i\frac{\pi}{2}}\mathbb{R}\cup V^+$, where
$V^{\pm}$ is in the right/left sheet of $\Lambda_{\pi/2}$.

\par
We claim that $g_{\pi/2}(\lambda)$ extends to be purely real on
the imaginary. From~(\ref{1.15}) and~(\ref{115}),
\begin{equation}
|s(\lambda)|=|e^{\Re g_{\pi/2}+i\Im g_{\pi/2}}|=1,\forall
\lambda>0.
\end{equation}
Hence, $ g_{\pi/2}$ is purely imaginary on real by~(\ref{111}).
~(\ref{111}) says
\begin{equation}\label{1.24}
g_{\pi/2}(-\lambda)=-g_{\pi/2}(\lambda),\forall\lambda\in
0i+\mathbb{R}.
\end{equation}
Hence, using identity theorem,~(\ref{1.24}) implies
\begin{equation}\label{1.29}
g_{\pi/2}(-\lambda)=\overline{g_{\pi/2}(\overline{\lambda})},\forall\lambda\in\Lambda_{\pi/2}.
\end{equation}
We extend~(\ref{1.29}) to $e^{i\frac{\pi}{2}}\mathbb{R}$ by
observing
\begin{equation}\label{1.26}
\lim_{\Re\lambda\rightarrow0^-}g_{\pi/2}(-\lambda)=\lim_{\Re\lambda\rightarrow0^+}\overline{g_{\pi/2}(\overline{\lambda})}
=\overline{\lim_{\Re\lambda\rightarrow0^+}g_{\pi/2}(\overline{\lambda})}
=\overline{g_{\pi/2}(-\lambda)}.
\end{equation}
Hence, the claim is proved.

It is obvious that, $\forall\lambda\in
e^{i\frac{\pi}{2}}\mathbb{R}\cup V^+$,
\begin{equation}\label{1.30}
g_{\pi/2}(\overline{\lambda})\rightarrow
g_{\pi/2}(-\lambda),\mbox{ as }\Re\lambda\rightarrow0^+.
\end{equation}
On the other hand, in $V^-$, using~(\ref{1.29}) and~(\ref{1.26}),
\begin{equation}\label{1.31}
g_{\pi/2}(-\lambda)=\overline{g_{\pi/2}(\overline{\lambda})}\rightarrow\overline{g_{\pi/2}(-\lambda)}=g_{\pi/2}(-\lambda),\mbox{
as }\Re\lambda\rightarrow0^-.
\end{equation}
Hence, for such a $\lambda$,~(\ref{1.30}) and~(\ref{1.31})
combines to give
\begin{equation}
\lim_{\Re\lambda\rightarrow
0^+}g_{\pi/2}(\overline{\lambda})=\lim_{\Re\lambda\rightarrow
0^-}g_{\pi/2}(-\lambda).
\end{equation}
Therefore, the value of $s(\lambda)$ meet from both sides up to
$e^{i\frac{\pi}{2}}\mathbb{R}$ outside $\{\mu_1,\cdots,\mu_m\}$
and $s(\lambda)$ is analytic in $V$ by Schwarz reflection. See
Lang \cite[p.298]{Lang}. $\Box$
\end{proof}
As far as resonances counting in concerned, $s(\lambda)$ defined
through~(\ref{111}) has as much as resonances in
$\Lambda_{\pi/2}^\ast$ as the original scattering determinant in
$\Lambda_1$. Hence, we do not differentiate these two scattering
determinant here.
\par
Let us define the order of a function that is holomorphic in
$\Lambda_{\frac{1}{2}}$ to be the greatest lower bound of numbers
$\nu$ for which
\begin{equation}
\limsup_{r\rightarrow\infty}\frac{\ln|f(re^{i\theta})|}{r^\nu}=0,
\end{equation}
uniformly in $\theta$, $0<\theta<\pi$. See Levin \cite{Levin}. We
observe that
\begin{lemma}
$s(\lambda)\prod_{j=1}^m\frac{\lambda-\mu_j}{\lambda-\overline{\mu}_j}$
is holomorphic in $\Lambda_{\frac{1}{2}}$ with zeroes
$\{\overline{\lambda}_j\}$ there such that $\{\lambda_j\}$ are the
poles of $s(\lambda)$ in $\Lambda_{\frac{\pi}{2}}^\ast$.
$s(\lambda)\prod_{j=1}^m\frac{\lambda-\mu_j}{\lambda-\overline{\mu}_j}$
is a holomorphic function of integral order at most $n$ in
$\Lambda_{\frac{1}{2}}$.
\end{lemma}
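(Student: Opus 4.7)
The plan is to read off each of the three assertions from the Hadamard factorization (\ref{1.15}) of $s$, the unitarity and functional equations (\ref{115})--(\ref{1.11}), and the polynomial counting bounds (\ref{110}) and (\ref{119}) that control the canonical product $P_{\pi/2}$.

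First, to establish holomorphicity on $\Lambda_{\frac{1}{2}}$, I observe that on this open upper half sheet the meromorphic extension of $(P-\lambda^2)^{-1}$ has poles only at the square roots $\mu_1,\dots,\mu_m$ of the $L^2$-eigenvalues of $P$, and these are the only poles of $s(\lambda)$ in $\Lambda_{\frac{1}{2}}$. The numerator $\prod_j(\lambda-\mu_j)$ of the Blaschke-type correction cancels these poles with the correct multiplicity, while the denominator $\prod_j(\lambda-\overline{\mu}_j)$ has its zeros in the lower half plane and therefore contributes no new poles to $\Lambda_{\frac{1}{2}}$. The resulting product is holomorphic there.

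Second, to identify the zero set, I would invoke the unitarity relation (\ref{115}), which translates to $s(\lambda)\,\overline{s(\overline{\lambda})}=1$ as a meromorphic identity, so every pole $\lambda_j$ of $s$ off the real axis produces a zero at $\overline{\lambda}_j$ with matching multiplicity. Inside $\Lambda_{\frac{1}{2}}$ the Blaschke correction is nonvanishing except at the points $\mu_j$, where its zeros are absorbed against the poles of $s$; consequently the remaining zeros of the product in $\Lambda_{\frac{1}{2}}$ are precisely the conjugates of the poles of $s$ lying in $\Lambda_{\frac{\pi}{2}}^\ast$. The odd reflection extension of $s$ across $e^{i\pi/2}\mathbb{R}$ established in the preceding lemma ensures that these conjugates enumerate exactly $\{\overline{\lambda}_j\}$, and nothing else.

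Third, for the growth statement I would substitute the explicit formula $s(\lambda)=e^{g_{\pi/2}(\lambda)}P_{\pi/2}(-\lambda)/P_{\pi/2}(\lambda)$ into the order definition. The symbol bound on $g_{\pi/2}$ yields $|e^{g_{\pi/2}(\lambda)}|\leq \exp(C_\epsilon(1+|\lambda|)^{n+\epsilon})$ uniformly in $\theta$, contributing order at most $n$. The counting bounds (\ref{110}) and (\ref{119}) show that the pole convergence exponent of $P_{\pi/2}$ is at most $n$, so the canonical product with primary factors $E(\cdot,m_0)$ of integer genus $m_0\leq n$ has integral order at most $n$ by the standard estimates of Levin \cite{Levin}. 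Multiplying by the rational Blaschke factor does not alter the order. The subtle point, and the main obstacle, is that the order must be shown to be integral rather than merely bounded by $n$: this requires matching the genus $m_0$ built into (\ref{1.18}) with the convergence exponent of the pole sequence pinned down by (\ref{119}), so that the canonical product and the exponential factor both have integral order at most $n$, after which the classical Hadamard-factorization bookkeeping closes the argument.
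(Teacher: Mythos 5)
The paper's own ``proof'' of this lemma is a single sentence: it asserts that the lemma is a direct consequence of the estimates of Vodev and Sj\"ostrand--Zworski. Those references contain direct exponential bounds on the scattering determinant itself (roughly $|s(\lambda)|\le e^{C|\lambda|^n}$ on sectors of the logarithmic plane), from which the order bound in $\Lambda_{1/2}$ follows without passing through the Zworski factorization~(\ref{1.15}). Your first two points (holomorphicity via the Blaschke cancellation of the finitely many eigenvalue poles, and identification of the zero set $\{\overline{\lambda}_j\}$ via the meromorphic continuation of $s(\lambda)\overline{s(\overline\lambda)}=1$ together with Lemma~1.1) are fine and essentially unavoidable; the paper omits them but they are the content one would need to supply.

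The third point is where your route genuinely diverges from the paper's, and where it has a gap. You derive the growth bound by substituting~(\ref{1.15}) and claiming that the symbol estimate on $g_{\pi/2}$ gives $|e^{g_{\pi/2}(\lambda)}|\le\exp(C_\epsilon(1+|\lambda|)^{n+\epsilon})$ ``uniformly in $\theta$.'' But the paper only states that $g_{\pi/2}$ is a symbol \emph{on $\mathbb{R}$}; nothing in the text gives a pointwise bound on $g_{\pi/2}$ throughout $\Lambda_{1/2}$, and you cannot propagate a boundary bound into the interior without an a~priori growth hypothesis (Phragm\'en--Lindel\"of needs exactly the kind of control you are trying to establish). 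A second, unaddressed issue with this route is that $P_{\pi/2}(\lambda)$ sits in the \emph{denominator} of~(\ref{1.15}): bounding the quotient $P_{\pi/2}(-\lambda)/P_{\pi/2}(\lambda)$ from above requires a minimum-modulus (or Cartan-type exceptional-disc) estimate for the canonical product, which you do not invoke. Both difficulties are sidestepped by the paper's intended argument, namely quoting the scattering-determinant bounds of Vodev and Sj\"ostrand--Zworski directly rather than reconstructing them from~(\ref{1.15}). Finally, the worry you flag about ``integral'' order is not the real obstacle: the phrase ``integral order at most $n$'' in the lemma is used only to justify taking genus $n$ in the canonical products for Govorov's theorem, and once the order is bounded by the integer $n$ this choice of genus is legitimate regardless of whether the true order is an integer.
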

The proof is a direct consequence due to the estimates of Vodev
\cite{Vodev,Vodev2} and Sj\"{o}strand and Zworski \cite{Sjostrand
and Zworski1991}. Given such a function
$s(\lambda)\prod_{j=1}^m\frac{\lambda-\mu_j}{\lambda-\overline{\mu}_j}$
with zeroes $\overline{\lambda}_j$ of growth order of at most $n$
in $\Lambda_{\frac{1}{2}}$, we can apply Govorov's theorem
\cite{Govorov}, also Levin \cite[Appendix VIII]{Levin}, which
serves as the Hadamard factorization theorem over
$\Lambda_{\frac{1}{2}}$. Govorov's theorem allows us to write down
 a Hadamard factorization theorem for an integral function of finite order
 in terms of its zeroes in an angle.

\par
In particular, we define
\begin{equation}
P(\lambda):=\prod_{\{\mu\in\{\lambda_j\}\}}E(\frac{\lambda}{\mu},n)^{m_\mu(R)}\mbox{
and
}\overline{P}(\lambda):=\prod_{\{\mu\in\{\lambda_j\}\}}E(\frac{\lambda}{\overline{\mu}},n)^{m_\mu(R)}
\end{equation}
where
\begin{equation}
E(z,p):=(1-z)\exp(1+\cdots+\frac{z^{p}}{p}).
\end{equation}
$\overline{P}(\lambda)$ is an entire function of finite order at
most $n$ in $\Lambda_{\frac{1}{2}}$. The conjugate is taken as in
$\mathbb{C}$. The convergence of this Weierstrass product is
guaranteed by~(\ref{119}) and~(\ref{110}). Then, using Govorov's
theorem,
\begin{equation}
s(\lambda)\prod_{j=1}^m\frac{\lambda-\mu_j}{\lambda-\overline{\mu}_j}=\exp\{i(a_0+a_1\lambda+\cdots+a_{n}\lambda^{n})\}
\exp\{\frac{1}{\pi
i}\int_{-\infty}^\infty(\frac{t\lambda+1}{t^2+1})^{n+1}\frac{d\Sigma(t)}{t-\lambda}\}
\frac{\overline{P}(\lambda)}{P(\lambda)},\hspace{2pt}\lambda\in\Lambda_{\frac{1}{2}},
\label{1.19}
\end{equation}
where $n$ is the order of the
$s(\lambda)\prod_{j=1}^m\frac{\lambda-\mu_j}{\lambda-\overline{\mu}_j}$,
$\{a_k\}$ are real constants, $\{\overline{\lambda}_j\}$ are the
zeroes of
$s(\lambda)\prod_{j=1}^m\frac{\lambda-\mu_j}{\lambda-\overline{\mu}_j}$
in $\Lambda_{\frac{1}{2}}$ and, most important of all,
\begin{equation}
\Sigma(t):=\lim_{y\rightarrow0^+}\int_0^t\ln|s(x+iy)\prod_{j=1}^m\frac{x+iy-\mu_j}{x+iy-\overline{\mu}_j}|dx.
\end{equation}
Since $n\geq4$, $\lambda=0$ is
neither an embedded eigenvalue nor a pole. Using~(\ref{1.2}),
\begin{equation}
\Sigma(t)=\int_0^t\ln\prod_{j=1}^m|\frac{x-\mu_j}{x-\overline{\mu}_j}|dx=0,\forall t\in\mathbb{R}.
\end{equation}
Hence, we conclude that
\begin{lemma}
\begin{equation}\label{1.21}
s(\lambda)\prod_{j=1}^m\frac{\lambda-\mu_j}{\lambda-\overline{\mu}_j}
=\exp\{i(a_0+a_1\lambda+\cdots+a_{n}\lambda^{n})\}
\frac{\overline{P}(\lambda)}{P(\lambda)},\hspace{2pt}\lambda\in\Lambda_{\frac{1}{2}}.
\end{equation}
\end{lemma}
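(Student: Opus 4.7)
The plan is to obtain the factorization as a direct specialization of the general representation~(\ref{1.19}) produced by Govorov's theorem, and then to show that the boundary integral in~(\ref{1.19}) vanishes identically because the integrand is trivial. I first invoke the previous lemma, which says that
$$f(\lambda):=s(\lambda)\prod_{j=1}^m\frac{\lambda-\mu_j}{\lambda-\overline{\mu}_j}$$
is holomorphic in $\Lambda_{1/2}$ with zeros $\{\overline{\lambda}_j\}$ and order at most $n$. This puts $f$ squarely in the framework of Govorov's angular Hadamard theorem, so~(\ref{1.19}) applies verbatim with the canonical product $\overline{P}(\lambda)/P(\lambda)$ absorbing the zeros and the polynomial $a_0+a_1\lambda+\cdots+a_n\lambda^n$ having real coefficients.

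Next I would compute $\Sigma(t)$. By construction,
$$\Sigma(t)=\lim_{y\to 0^+}\int_0^t\ln\bigl|s(x+iy)\textstyle\prod_{j=1}^m\frac{x+iy-\mu_j}{x+iy-\overline{\mu}_j}\bigr|\,dx,$$
so I must control $\ln|s(x+iy)|$ as $y\to 0^+$. The unitarity identity~(\ref{1.2}) asserts $|s(x)|=1$ on the real axis (with the branch chosen via~(\ref{111})), and since $n\geq 4$ the origin is neither an embedded eigenvalue nor a pole of $s$, so there is no extra boundary singularity at $\lambda=0$. Dominated convergence (using~(\ref{119}) and~(\ref{110}) to bound $\ln|s|$ near the real axis) then lets me pass the limit inside the integral to obtain
$$\Sigma(t)=\int_0^t\ln\prod_{j=1}^m\Bigl|\frac{x-\mu_j}{x-\overline{\mu}_j}\Bigr|\,dx.$$
For each $j$, the factor $|x-\mu_j|/|x-\overline{\mu}_j|$ equals $1$ on the real axis because $\overline{\mu}_j$ is the complex conjugate of $\mu_j$, so the integrand vanishes and $\Sigma\equiv 0$.

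Plugging $d\Sigma\equiv 0$ into~(\ref{1.19}) eliminates the integral factor and leaves precisely the claimed identity~(\ref{1.21}). The only delicate point, which I expect to be the main obstacle, is the justification of interchanging the limit $y\to 0^+$ with the integral in the definition of $\Sigma$: one must verify that $\ln|s(x+iy)|$ is uniformly controlled (in $y$) near the real axis on compact sets of $x$, which in turn depends on ruling out resonances on the real axis away from $\{\mu_j\}$ and on the known polynomial growth bounds for $s$ from~(\ref{119}) and~(\ref{110}). Once that interchange is secured, the rest of the argument is bookkeeping.
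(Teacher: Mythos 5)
Your proposal is correct and follows essentially the same route as the paper: specialize Govorov's factorization~(\ref{1.19}) to $f(\lambda)=s(\lambda)\prod_{j}\frac{\lambda-\mu_j}{\lambda-\overline{\mu}_j}$ using Lemma~1.2, then kill the boundary integral by observing $\Sigma\equiv0$ because $|s|\equiv1$ on the real axis (from~(\ref{1.2})) and $|x-\mu_j|=|x-\overline{\mu}_j|$ for real $x$. Your added discussion of the $y\to0^+$ limit interchange is more careful than the paper's one-line appeal to~(\ref{1.2}) and the remark that $\lambda=0$ is regular for $n\geq4$, but it does not constitute a different argument.
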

Let us define
\begin{equation}
\sigma(\lambda):=\frac{1}{2\pi i}\log s(\lambda)
\end{equation}
and
\begin{equation}
g(\lambda):=i(a_0+a_1\lambda+\cdots+a_{n}\lambda^{n}).
\end{equation}
Functional analysis and~(\ref{111}) give
\begin{equation}\label{1.14}
\sigma'(\lambda)=\frac{1}{2\pi i}\frac{s'(\lambda)}{s(\lambda)}.
\end{equation}
In particular,~(\ref{1.21}) and~(\ref{1.14}) give
\begin{equation}\label{2.22}
\sigma'(\lambda)=\frac{1}{2\pi i}g'(\lambda)+\frac{1}{2\pi
i}\sum_{\mu_j}
\frac{1}{\lambda-\overline{\mu}_j}-\frac{1}{\lambda-\mu_j}
+\frac{1}{2\pi
i}\sum_{\lambda_j}\frac{1}{\lambda-\overline{\lambda}_j}-\frac{1}{\lambda-\lambda_j}
+Q_{\lambda}(\overline{\lambda_j})-Q_\lambda(\lambda_j),
\end{equation}
where
\begin{equation}
Q_\lambda(\lambda_j):=(\frac{1}{\lambda_j})(1+(\frac{\lambda}{\lambda_j})
+\cdots+(\frac{\lambda}{\lambda_j})^{n-1}),
\end{equation}
which is a polynomial in $\lambda$ provided $\{\lambda_j\}\neq0$.
$g'(\lambda)$ is also a polynomial of order no greater than $n-1$.

The main theorem of this paper is
\begin{theorem}
Let $n\geq4$ be even. Under the assumption that the set of closed
transversally reflected geodesics in ${\rm T}^\ast\Omega$ has measure zero, the resonance
counting function for $P$ in $\Lambda_{1}$,
\begin{equation}\label{1.27}
N_1(r)=\frac{(2\pi)^{-n}\omega_n{\rm
Vol}(\mathcal{O})}{\Gamma(n+1)}\{1+o(\frac{1}{r})\}r^n,\mbox{ as
}r\rightarrow\infty,
\end{equation}
where $\omega_n$ is the volume of the unit sphere in
$\mathbb{R}^n$.
\end{theorem}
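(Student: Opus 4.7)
The plan is to turn the factorization in (1.21) into a Poisson Summation Formula relating the resonances $\{\lambda_j\}$ to the regularized wave trace, and then to read off the Weyl-type asymptotic (1.27) from the leading singularity of that trace at $t=0$.

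First, I would pair the identity (2.22) with a test function $\rho\in\mathcal{S}(\mathbb{R})$ whose Fourier transform $\hat\rho$ is compactly supported in a small neighborhood of the origin, chosen small enough to exclude the lengths of closed transversally reflected geodesics on $\partial\mathcal{O}$ (possible because by hypothesis that set has measure zero). Pushing the contour of $\int\sigma'(\lambda)\hat\rho(\lambda)\,d\lambda$ off the real axis and collecting residues converts the poles of $\sigma'$ at resonances into a sum over $\{\lambda_j\}$. The finitely many terms coming from the embedded eigenvalues $\{\mu_j\}$ are bounded, while the polynomial $g'(\lambda)$ together with the regularizers $Q_\lambda(\overline\lambda_j)-Q_\lambda(\lambda_j)$ contributes only a finite-order distribution supported at $t=0$. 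The outcome is a Poisson-type identity
\begin{equation*}
\sum_{\lambda_j\in\Lambda_1}\rho(\lambda_j)=u(\rho)+P(\rho),
\end{equation*}
where $u$ is the regularized wave trace on $\Omega$ and $P$ is a fixed distribution supported at $\{0\}$.

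Second, I would use the small-$t$ expansion of $u(t)$. Under the measure-zero hypothesis, no closed reflected geodesic interferes on $\mathrm{supp}\,\hat\rho$, so the only contribution comes from $t=0$. The standard Hadamard parametrix / heat-kernel computation — subtracting the free wave trace on $\mathbb{R}^n$ and invoking the interior Weyl term for the complementary region — yields a leading singularity whose Fourier content rescales to the coefficient $(2\pi)^{-n}\omega_n\mathrm{Vol}(\mathcal{O})/\Gamma(n+1)$. Concentrating $\rho$ near frequency $r$ gives the smoothed counting asymptotic, and a Karamata-type Tauberian step — with the polynomial upper bound $N_1(r)=O(r^n)$ from (1.11)-(1.12) providing the required monotone control — converts this into (1.27).

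The main obstacle will be pinning down $g(\lambda)=i(a_0+\cdots+a_n\lambda^n)$ and the Weierstrass correctors $Q_\lambda$, both of which a priori could contribute at order $r^n$ and swamp the Weyl term. I would use the functional equations (1.15)-(1.17) and the unitarity $|s(\lambda)|=1$ on $\mathbb{R}$ to identify $\Im g'$ with the smooth part of the scattering phase derivative, so that these terms collapse into the distribution $P$ supported at $\{0\}$ without inflating the leading order. The $o(1/r)$ remainder in (1.27) then follows from the polynomial-growth bound $|\partial^k g_{\pi/2}|\leq C_{k,\epsilon}(1+|\lambda|)^{n+\epsilon-k}$, which prevents any subleading $r^n$ interference after integration against a concentrated $\rho$.
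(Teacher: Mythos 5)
Your overall strategy — converting the factorization \eqref{1.21} into a Poisson-type trace identity, reading off the Weyl coefficient from the $t\to 0^+$ singularity of $\mathrm{Tr}\,u(t)$, and finishing with a Tauberian step — is the right skeleton and matches the paper's. However, there is a genuine gap at the Tauberian step, and a mismatch in how the polynomial terms are eliminated.

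The central difficulty you do not address is that the resonances $\lambda_j$ are \emph{complex}, so the object $dN_1(r(\sigma))$ is a measure on a set $\mathcal{Z}\subset\mathbb{C}$, not a positive measure on $[0,\infty)$. Karamata's theorem, which you invoke, applies only to Laplace transforms $\int_0^\infty e^{-t\lambda}d\mu(\lambda)$ of positive measures on the half-line. The paper's Lemma 2.4 is exactly the device that bridges this gap: it decomposes $\mathcal{Z}$ according to whether $|\Re\sigma|\geq|\Im\sigma|$ or not, shows that replacing $e^{it\sigma}$ by $e^{itx}$ (respectively $e^{-ty}$) produces a negligible error as $t\to 0^+$, and after a rotation collapses both pieces into a single Stieltjes integral $\int_{\mathbb{R}^+}e^{-ty}\,d\{N_\Im(|y|)+N_\Re(|y|)\}$ of the required form. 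Without this (or an equivalent substitute), your statement ``a Karamata-type Tauberian step, with the polynomial upper bound providing the required monotone control, converts this into \eqref{1.27}'' does not have a valid hypothesis to apply to.

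A second, smaller discrepancy: you handle $g'(\lambda)$ and the regularizers $Q_\lambda(\overline{\lambda}_j)-Q_\lambda(\lambda_j)$ by arguing that their Fourier transforms are distributions supported at $t=0$ and by appealing to the symbol estimate on $g_{\pi/2}$. The paper instead notes that both are polynomials of degree at most $n-1$, multiplies by $t^n$, and uses the fact that $t^n\widehat{\sigma'}(t)$ equals the Fourier transform of $\partial_\lambda^n\sigma'(\lambda)$, which has no polynomial contribution; this gives the exact identity \eqref{2.11} valid in $\mathcal{D}'(\mathbb{R}^+)$, not merely a statement that the bad part is ``supported at $\{0\}$.'' Your route would need the additional observation that the offending distribution, paired against a window $\rho$ concentrated near frequency $r$, contributes $O(r^{n-1})$ rather than $O(r^n)$; this is plausible since $g'$ and $Q_\lambda$ have degree $n-1$, but it is not shown, and you also cite the symbol $g_{\pi/2}$ from the $\Lambda_{\pi/2}$ factorization \eqref{1.15} rather than the polynomial $g$ from the upper half-plane factorization \eqref{1.21} that actually enters $\sigma'$, so the estimate you quote is not the relevant one.

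Finally, the role you assign to the measure-zero hypothesis — choosing $\mathrm{supp}\,\hat\rho$ to avoid lengths of closed reflected geodesics — is not how the paper uses it; the paper uses the hypothesis only through the validity of the short-time expansion \eqref{2.17} (Ivrii's theorem) and works with $t\to 0^+$ directly, so no such length-gap argument is needed. Your version would also conflict with your own requirement that $\hat\rho$ be supported in a neighborhood of the origin, since the singular support of $P$ at $t=0$ would then be unavoidable.
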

The Weyl's asymptotics~(\ref{1.27}) is classical for the
eigenvalue counting problem in interior problems. We refer to
 Ivrii \cite{Ivrii} and Melrose \cite{Melrose2} for a discussion.

\section{A Proof}
Firstly, we need a satisfactory Poisson summation formula. We
start with a Birman-Krein formula. Such a formula is common in
scattering theory for all kinds of perturbation. For a black box
formalism setting, we refer to Christiansen
\cite{Christiansen,Christiansen2}.

\par
Defining the naturally regularized wave propagator,
\begin{equation}
u(t):=2\{\cos{t\sqrt{P}}-\cos{t}\sqrt{-\Delta}\}
\in\mathcal{D}'(\mathbb{R};\mathcal{J}_1(L^2(\Omega);L^2(\Omega))),
\end{equation}
where $\mathcal{J}_1(L^2(\Omega);L^2(\Omega))$ is the trace class
in $L^2$. $u(t)$ has a distributional trace. Let
$R^0(\lambda):=(-\Delta-\lambda^2)^{-1}$. By
spectral analysis, we have
\begin{eqnarray} {\rm
Tr}\{u(t)\}=\int_\mathbb{R}e^{it\lambda}2\lambda{\rm
Tr}\{R(\lambda)-R^0(\lambda)-R(-\lambda)+R^0(-\lambda)\}d\lambda
+2\sum_{\Im\mu_j>0}\cos(t\mu_j).\label{2.2}
\end{eqnarray}
By Birman-Krein theory, we have
\begin{equation}
\sigma'(\lambda)=2\lambda{\rm
Tr}\{R(\lambda)-R^0(\lambda)-R(-\lambda)+R^0(-\lambda)\}\in\mathcal{S}'(\mathbb{R}).
\end{equation}
On the other hand, we continue from~(\ref{2.22})
\begin{eqnarray}\nonumber
\sigma'(\lambda)&=&\frac{1}{2\pi i}g'(\lambda)+\frac{1}{2\pi
i}\sum_{\mu_j}
\frac{1}{\lambda-\overline{\mu}_j}-\frac{1}{\lambda-\mu_j}\\&&
+\frac{1}{2\pi
i}\sum_{\lambda_j}\frac{1}{\lambda-\overline{\lambda}_j}-\frac{1}{\lambda-\lambda_j}
+Q_\lambda(\overline{\lambda}_j)-Q_\lambda(\lambda_j)\in\mathcal{S}'(\mathbb{R}).
\end{eqnarray}
In our case, $g'(\lambda)$ and $Q_\lambda(\lambda_j)$ are at most
of order $n-1$. Hence,
\begin{eqnarray}
\partial_\lambda^{n}\sigma'(\lambda)=\frac{1}{2\pi
i}\sum_{\mu_j}
\frac{n!}{(\lambda-\overline{\mu}_j)^{n+1}}-\frac{n!}{(\lambda-\mu_j)^{n+1}}
+\frac{1}{2\pi
i}\sum_{\lambda_j}\frac{n!}{(\lambda-\overline{\lambda}_j)^{n+1}}-\frac{n!}{(\lambda-\lambda_j)^{n+1}}.
\end{eqnarray}
Therefore, for $t\neq0$,
\begin{eqnarray}\nonumber
t^n\int_\mathbb{R}e^{it\lambda}\sigma'(\lambda)d\lambda&=&\sum_{\mu_j}\frac{1}{2\pi
i}\int_\mathbb{R}e^{it\lambda}\{
\frac{n!}{(\lambda-\overline{\mu}_j)^{n+1}}-\frac{n!}{(\lambda-\mu_j)^{n+1}}\}d\lambda\\&&+\sum_{\lambda_j}\frac{1}{2\pi
i}\int_\mathbb{R}e^{it\lambda}\{\frac{n!}{(\lambda-\overline{\lambda}_j)^{n+1}}-\frac{n!}{(\lambda-\lambda_j)^{n+1}}\}
d\lambda. \label{2.5}
\end{eqnarray}
For $t\neq0$, we have
\begin{eqnarray}\nonumber
{\rm Tr}\{u(t)\}&=&\sum_{\mu_j}\frac{1}{2\pi
i}\int_\mathbb{R}e^{it\lambda}\{
\frac{1}{\lambda-\overline{\mu}_j}-\frac{1}{\lambda-\mu_j}\}d\lambda\\\nonumber
&&+\sum_{\lambda_j}\frac{1}{2\pi
i}\int_\mathbb{R}e^{it\lambda}\{\frac{1}{\lambda-\overline{\lambda}_j}-\frac{1}{\lambda-\lambda_j}\}d\lambda
\\&&+2\sum_{\Im\mu_j>0} \cos(t\mu_j).\label{2.7}
\end{eqnarray}
This cancelling $t\neq0$ technique is due to Zworski
\cite{Zworski2}. Most importantly, we have the following
well-known Poisson integral formula. See Lang \cite{Lang}.
\begin{lemma}
Let $f$ be a bounded holomorphic function over the closed upper
half plane. Defining
\begin{equation}
h_z(\zeta):=\frac{1}{2\pi
i}(\frac{1}{\zeta-z}-\frac{1}{\zeta-\overline{z}}),\mbox{ where
}\zeta,z\in\Lambda_{\frac{1}{2}},
\end{equation}
the following identity holds.
\begin{equation}
\int_{-\infty}^\infty f(\lambda)h_z(\lambda)d\lambda=f(z).
\end{equation}
\end{lemma}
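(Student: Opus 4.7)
The function $h_z$ is the classical Poisson kernel in disguise: a direct computation for $\lambda\in\mathbb{R}$ gives
\[
h_z(\lambda)=\frac{1}{2\pi i}\cdot\frac{z-\overline z}{(\lambda-z)(\lambda-\overline z)}=\frac{1}{\pi}\cdot\frac{\Im z}{|\lambda-z|^2},
\]
so the claimed identity is the Poisson integral representation for bounded holomorphic functions on the upper half plane. My plan is to derive it as a direct application of Cauchy's integral formula on a large semicircular contour together with a dominated passage to the limit.

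Let $C_R$ denote the boundary of $\{\zeta:|\zeta|<R,\ \Im\zeta>0\}$, traversed counterclockwise, consisting of the real segment $[-R,R]$ and the upper semicircle $\Gamma_R$. For $R>|z|$, the point $z$ lies inside $C_R$ while $\overline z$ does not, so Cauchy's integral formula gives
\[
f(z)=\frac{1}{2\pi i}\oint_{C_R}\frac{f(\zeta)}{\zeta-z}\,d\zeta,\qquad 0=\frac{1}{2\pi i}\oint_{C_R}\frac{f(\zeta)}{\zeta-\overline z}\,d\zeta.
\]
Subtracting yields $f(z)=\int_{C_R}f(\zeta)\,h_z(\zeta)\,d\zeta$, and I would split this into the contribution from the real segment and from $\Gamma_R$.

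The remaining step is to let $R\to\infty$ and show that the $\Gamma_R$ piece drops out. Since $f$ is bounded on the closed upper half plane by some constant $M$, and on $\Gamma_R$
\[
|h_z(\zeta)|=\left|\frac{z-\overline z}{2\pi i(\zeta-z)(\zeta-\overline z)}\right|\le\frac{|\Im z|}{\pi(R-|z|)^2},
\]
the contribution from $\Gamma_R$ is bounded by $M|\Im z|R/(R-|z|)^2=O(1/R)\to 0$. The integral on $[-R,R]$ converges to $\int_{-\infty}^{\infty}f(\lambda)h_z(\lambda)\,d\lambda$ by dominated convergence, since $h_z\in L^1(\mathbb{R})$ with total mass $1$.

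There is no serious obstacle here; the only technicality is that one needs $f$ to be continuous and bounded up to the real axis (the hypothesis in the statement) so that the boundary values $f(\lambda)$ exist, $|f(\lambda)h_z(\lambda)|\le M\cdot h_z(\lambda)$ is an integrable majorant, and the Cauchy formulas above are legitimate with the contour touching the real axis. A minor point worth checking is the orientation in the original definition of $h_z$ so that no sign is lost in the subtraction.
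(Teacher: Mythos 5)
Your proof is correct. The paper itself does not prove this lemma at all --- it simply states it as a ``well-known Poisson integral formula'' and cites Lang's \emph{Complex Analysis} textbook. So there is no internal argument in the paper to compare against.

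Your contour-integral derivation is the standard one and is sound: the identification of $h_z$ with the Poisson kernel $\frac{1}{\pi}\frac{\Im z}{|\lambda-z|^2}$ is correct, the subtraction of the two Cauchy integrals (one reproducing $f(z)$, the other vanishing because $\overline z$ lies outside the contour) is legitimate, the $O(1/R)$ estimate on $\Gamma_R$ is right (length $\pi R$ times the kernel bound $|\Im z|/\pi(R-|z|)^2$), and dominated convergence with the $L^1$ majorant $M\,h_z$ handles the passage $\int_{-R}^R\to\int_{\mathbb R}$. You are also right to flag that boundedness and continuity up to the real axis are what make the boundary values well-defined and justify the Cauchy formula on a contour touching $\mathbb R$ (strictly one should push the flat side slightly into the upper half-plane and let it descend, but that is a routine limiting argument). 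This is exactly the kind of proof one would find in Lang, so functionally you have supplied the content the paper outsourced to the reference.

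One small remark worth noting in the context of how the lemma is \emph{used} in the paper: it is applied to $f(\lambda)=e^{it\lambda}$ with $t>0$, which is bounded on the closed upper half-plane precisely because $t>0$; your hypothesis check (``bounded and continuous up to the real axis'') is therefore the operative one, and it is good that you made it explicit.
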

Using this lemma,~(\ref{2.7}) becomes, when $t>0$,
\begin{eqnarray}
{\rm
Tr}\{u(t)\}=\sum_{\Im\lambda_j<0}e^{it\overline{\lambda}_j}
+\sum_{\Im\mu_j>0}e^{-it\mu_j},
\end{eqnarray}
which we rewrite as
\begin{eqnarray}\label{2.11}
{\rm Tr}\{u(t)\}=\sum_{\sigma_j}e^{it\sigma_j},\mbox{ in
}\mathcal{D}'(\mathbb{R}^+),
\end{eqnarray}
in which $\sigma_j$ are either $\overline{\lambda}_j$ or $-\mu_j$ such that $\mu_j^2$ are eigenvalues
of $P$. Alternatively, we can write
\begin{eqnarray}\label{2.10}
{\rm Tr}\{u(t)\}=\int_{\mathcal{Z}}
e^{it\sigma}dN_1(r(\sigma)),\mbox{ in }\mathcal{D}'(\mathbb{R}^+).
\end{eqnarray}
where
\begin{equation}
r(\sigma)\mbox{ is a norm of }\sigma\mbox{ as in }\mathbb{R}^2;
\end{equation}
\begin{equation}
\sigma\in\mathcal{Z}:=\{-\mu_1,\cdots,-\mu_m,\overline{\lambda}_1,\overline{\lambda}_2,\cdots\}.
\end{equation}
We see $N_1(r(\sigma))$ as a function of $\sigma\in\mathcal{Z}$.
For example, $N_1(|\sigma_j|)=j$ and the mapping $\sigma_j\mapsto
j$ is a well-defined function, so $N_1(|\sigma_j|)$ is seen as a
function of $\sigma_j$ not merely as a function of $|\sigma_j|$.
Moreover, the summation~(\ref{2.11}) is understood as the complex
abstract integration in Ash's book \cite[p.94,p.90(1)]{Ash} from
which we quote as Lemma 2.2 below. Hence, the integrand
$e^{it\sigma}$ and the integrator $dN_1(r(\sigma))$
in~(\ref{2.10}) are functions to variable $\sigma$.
\begin{lemma}\label{22}
If $f=(f(\alpha),\alpha\in\mathcal{Z})$ is a real- or  complex-valued
function on the arbitrary set $\mathcal{Z}$, and $\mu$ is counting
measure on subsets of $\mathcal{Z}$, then $\int_\mathcal{Z}
fd\mu=\sum_{\alpha}f(\alpha)$.
\end{lemma}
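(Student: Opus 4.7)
The statement is the standard identity from measure theory that Lebesgue integration against counting measure coincides with the (possibly uncountable) unordered sum, and the plan is the canonical three-step extension argument from characteristic functions of singletons to general integrands.

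First I would verify the identity on indicators of singletons: for any $\alpha_0\in\mathcal{Z}$, $\int_\mathcal{Z}\mathbf{1}_{\{\alpha_0\}}d\mu=\mu(\{\alpha_0\})=1$ by the very definition of counting measure, while $\sum_{\alpha}\mathbf{1}_{\{\alpha_0\}}(\alpha)=1$ trivially. Linearity of both the integral and the finite sum then extends the identity to every simple function with finite support, $\varphi=\sum_{i=1}^{N}c_i\mathbf{1}_{\{\alpha_i\}}$. This handles the base case of the measure-theoretic induction.

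Second, for nonnegative $f$ I would approximate by the net of truncations $f_F:=f\mathbf{1}_F$ indexed by the finite subsets $F\subset\mathcal{Z}$ directed by inclusion. Each $f_F$ is a finite simple function, so by step one $\int f_F\,d\mu=\sum_{\alpha\in F}f(\alpha)$. Since $f_F\uparrow f$ pointwise as $F$ exhausts $\mathcal{Z}$, the monotone convergence theorem gives $\int_\mathcal{Z} f\,d\mu=\sup_F\int f_F\,d\mu=\sup_F\sum_{\alpha\in F}f(\alpha)=\sum_{\alpha}f(\alpha)$, where the last equality is literally the definition of the unordered sum of a nonnegative family. For real- or complex-valued $f$, decomposing into positive, negative, real, and imaginary parts and recombining via linearity finishes the argument under the natural integrability/summability hypothesis $\int|f|\,d\mu<\infty\Leftrightarrow\sum_{\alpha}|f(\alpha)|<\infty$; when this fails, both sides are simultaneously $+\infty$ in the nonnegative case or simultaneously undefined in the signed case.

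The only genuine obstacle is notational rather than mathematical: one must match the symbol $\sum_{\alpha}f(\alpha)$ on a potentially uncountable index set $\mathcal{Z}$ with its unordered-sum interpretation so that the limit in the monotone convergence step aligns with the meaning intended in the paper. In the actual application, however, $\mathcal{Z}\subset\mathbb{C}$ consists of the eigenvalues $-\mu_j$ and the conjugated resonances $\overline{\lambda}_j$ of $R(\lambda)$, which form a discrete countable set, so the sum collapses to an ordinary series and the identity follows at once once the tempered-distribution convergence in~(\ref{2.10})--(\ref{2.11}) has been established.
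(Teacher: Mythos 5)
The paper does not prove this lemma at all: it is quoted verbatim as a citation to Ash's textbook (\cite[p.94, p.90(1)]{Ash}), so there is no in-paper argument against which to compare. Your proof is the standard textbook argument (indicators of singletons $\to$ finitely supported simple functions $\to$ nonnegative functions via approximation over finite subsets $\to$ signed/complex functions by decomposition) and is correct; the only point worth tightening is the appeal to ``the monotone convergence theorem'' over the net of finite subsets, which is cleaner to phrase directly from the definition of $\int f\,d\mu$ as $\sup_{0\le\varphi\le f}\int\varphi\,d\mu$ together with the observation that any simple $\varphi$ with finite integral against counting measure is finitely supported, since that avoids invoking a net version of MCT that is not universally stated for non-$\sigma$-finite measures. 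Your closing remark that $\mathcal{Z}$ is in fact a discrete countable set in the paper's application, so the unordered sum reduces to an ordinary series, is exactly the right context and matches what the paper needs from the lemma.
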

\par
For the left hand side of~(\ref{2.10}), we use the short time
asymptotic behavior of ${\rm Tr}\{u(t)\}$. It is well-known that
\begin{equation}
\label{2.17} {\rm Tr}\{u(t)\}\sim
a_0t^{-n}+a_1\delta^{(n-2)}(t)+\cdots,\hspace{2pt}\mbox{ as
}t\rightarrow0,
\end{equation}
where
\begin{equation}
a_0=(2\pi)^{-n}\omega_n{\rm
Vol}(\mathcal{O}),\hspace{2pt}a_1=\alpha_1{\rm
Vol}(\partial\mathcal{O}),\cdots,
\end{equation}
with $\omega_n$ as the volume of the unit sphere in
$\mathbb{R}^n$ and
$\omega_n=\frac{n\pi^{\frac{n}{2}}}{\Gamma(\frac{n}{2}+1)}$. The
constant $a_0$ here is derived in Ivrii \cite{Ivrii}.
Additionally, we refer to Branson and Gilkey \cite{Branson} for a
heat propagator version of this formula. Therefore,
\begin{equation}\label{2.16}
\int_\mathcal{Z}
e^{it\sigma}dN_1(r(\sigma))\sim(2\pi)^{-n}\omega_n{\rm Vol}(\mathcal{O}) t^{-n}+\cdots,\mbox{ as
}t\rightarrow0^+.
\end{equation}
We want to convert the information in $\mathbb{C}$ to be the one in
$\mathbb{R}$. Without loss of generality, we use the sup-norm from
now on. That is
\begin{equation}
N_1(r(\sigma)):=|\{\sigma_j||\Im\sigma_j|\leq
r(\sigma),|\Re\sigma_j|\leq r(\sigma)\}|.
\end{equation}
Accordingly, we set
\begin{equation}
N_\Re(r(\sigma)):=|\{\sigma_j|
r(\sigma)\geq|\Re\sigma_j|\geq|\Im\sigma_j|,|\Re\sigma_1|\leq|\Re\sigma_2|\leq\cdots\}|
\end{equation}
and
\begin{equation}
N_\Im(r(\sigma)):=|\{\sigma_j||\Re\sigma_j|<|\Im\sigma_j|\leq
r(\sigma),|\Im\sigma_{1'}|\leq|\Im\sigma_{2'}|\leq\cdots\}|.
\end{equation}
Let $\sigma=x+iy$. $N_\Re(r(\sigma))$ is a function of $x$ whenever $\Re\sigma\geq\Im\sigma$. In
particular,
\begin{equation}
N_\Re(r(\sigma))=N_\Re(\max\{|x|,|y|\})=N_\Re(|x|).
\end{equation}
Similarly, we use
$N_\Im(r(\sigma))=N_\Im(|y|)$ whenever $\Re\sigma<\Im\sigma$.
Accordingly, we decompose
\begin{eqnarray}\label{2.21}  \nonumber
\int_\mathcal{Z} e^{it\sigma}dN_1(r(\sigma))&=&\int_{|\Re\sigma|\geq|\Im\sigma|}
e^{it\sigma}dN_\Re(r(\sigma))+\int_{|\Re\sigma|<|\Im\sigma|} e^{it\sigma}dN_\Im(r(\sigma))\\
&=&\int_{|\Re\sigma|\geq|\Im\sigma|}
e^{it\sigma}dN_\Re(|x|)+\int_{|\Re\sigma|<|\Im\sigma|} e^{it\sigma}dN_\Im(|y|).
\end{eqnarray}

\begin{lemma}
Let $\sigma=x+iy\in\mathbb{C}$. We have
\begin{equation}\label{2.23}
\int_{|\Re\sigma|<|\Im\sigma|}
(e^{it\sigma}-e^{-ty})dN_\Im(|y|)\rightarrow0\mbox{ in
}\mathcal{D}'(\mathbb{R}^+),\mbox{ as }t\rightarrow0^+;
\end{equation}
 \begin{equation}
\int_{|\Re\sigma|\geq|\Im\sigma|}
(e^{it\sigma}-e^{itx})dN_\Re(|x|)\rightarrow0\mbox{ in
}\mathcal{D}'(\mathbb{R}^+),\mbox{ as }t\rightarrow0^+.
\end{equation}
\end{lemma}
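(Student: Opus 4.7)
The plan is to exploit the region constraint $|\Re\sigma|<|\Im\sigma|$ to write the integrand as a uniformly bounded function that vanishes pointwise as $t\to 0^+$, and then to upgrade this to convergence in $\mathcal{D}'(\mathbb{R}^+)$ using the polynomial counting bounds of Vodev~(\ref{119})--(\ref{110}).

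For the first statement, writing $\sigma=x+iy$, I would factor
$$e^{it\sigma}-e^{-ty}=e^{-ty}(e^{itx}-1).$$
On the region $|x|<|y|$, the elementary inequality $|e^{itx}-1|\leq t|x|\leq t|y|$ yields the pointwise bound
$$|e^{it\sigma}-e^{-ty}|\leq t|y|e^{-ty}\leq e^{-1},$$
using $\sup_{s\geq 0}se^{-s}=e^{-1}$. In particular, for each fixed $\sigma$ the integrand tends to $0$ as $t\to 0^+$, and finitely many $\sigma_j$ with $\Im\sigma_j<0$ (the images of the eigenvalues $\mu_j$) contribute a smooth function of $t$ that vanishes at $t=0$.

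To promote this pointwise bound to convergence in $\mathcal{D}'(\mathbb{R}^+)$, I would pair with a test function $\phi\in C_c^\infty((0,\infty))$, swap sum and integral by Fubini, and perform repeated integration by parts in $t$ on each term $\int_0^\infty\phi(t)\,e^{-ty_j}(e^{itx_j}-1)\,dt$. This generates negative powers of $|y_j|$ (and of $|x_j|\leq|y_j|$), after which the polynomial growth $N_\Im(R)=O(R^n)$ guarantees absolute convergence of the resulting series uniformly on the support of $\phi$; DCT as the support contracts to $\{0\}$ then gives the desired vanishing. The second identity is handled symmetrically: on $|x|\geq|y|$ I would factor $e^{it\sigma}-e^{itx}=e^{itx}(e^{-ty}-1)$, apply $|e^{-ty}-1|\leq Ct|y|\leq Ct|x|$ in the regime $t|x|\lesssim 1$, and dispose of the complementary regime using the counting bound for $N_\Re$ together with the exponential factor.

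The main obstacle is that the crude absolute-value estimate $\sum_j t|y_j|e^{-ty_j}=O(t^{-n})$ is of exactly the same order as the leading singularity $t^{-n}$ in~(\ref{2.17}); thus the lemma cannot be proved as a pointwise statement on $\mathbb{R}^+$ by brute magnitude bounds alone. The distributional framework is essential: by testing against $\phi\in C_c^\infty$ and integrating by parts in $t$, one trades the uncontrolled raw sum for one with enough polynomial decay in the resonance index to invoke Vodev's estimates without requiring hard pointwise cancellation among the oscillating phases $e^{itx_j}$.
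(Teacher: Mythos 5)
You follow the paper's factorization $e^{it\sigma}-e^{-ty}=e^{-ty}(e^{itx}-1)$, but then diverge: the paper bounds $|e^{itx}-1|$ by term-by-term power-series comparison with $e^{ty}-1$ and invokes the dominated convergence theorem after scaling $t=\gamma s$, whereas you bound $|e^{itx}-1|\leq t|x|\leq t|y|$ and propose repeated integration by parts against a test function. Your observation that the crude magnitude estimate $\sum_j t|y_j|e^{-ty_j}=O(t^{-n})$ is of the same order as the leading singularity $t^{-n}$ in~(\ref{2.17}) is correct and sharp --- and in fact it applies equally to the paper's own final step: the implicit dominator there, $\sup_\gamma\bigl(1-\int\varphi_\gamma(t)e^{-ty}\,dt\bigr)$, is close to $1$ for $y\gg 1/\gamma$ and hence is not in $L^1(dN_\Im(|y|))$ whenever $N_\Im(\infty)=\infty$, so there is no obvious $L^1$ envelope to feed to DCT.

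Your proposed integration-by-parts remedy, however, does not escape the same trap. The statement must ultimately be tested against the scaled family $\varphi_\gamma(t)=\gamma^{-1}\varphi(t/\gamma)$, and integrating by parts $N$ times in $t$ brings in $\gamma^{-N}$ from $\varphi^{(N)}(t/\gamma)$ that exactly offsets the gain of $|y_j|^{-N}$; the resulting bound $\sum_j(\gamma|y_j|)^{-N}$ still scales as $\gamma^{-n}$ when one sums dyadically against $N_\Im(R)=O(R^n)$, which is the uncontrolled order you were trying to avoid, not $o(\gamma^{-n})$. What is missing from your sketch (and, for that matter, from the paper's DCT argument) is an actual cancellation mechanism among the phases $e^{itx_j}$, or some structural input about the resonance set --- e.g.\ that $N_\Im$ is finite or grows strictly slower than $N_\Re$ --- neither of which is supplied. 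So while your diagnosis of the obstruction is accurate and more explicit than the paper's, the final convergence step of your proposal is a genuine gap.
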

\begin{proof}
Without of loss generality, we prove for all
$\varphi(t)\in\mathcal{C}^\infty_0(\mathbb{R}^+;[0,1])$. Since
$e^{-ty}\varphi(t)$ is nonnegative, Tonelli's theorem gives the
product measure
\begin{equation}
\int e^{-ty}\varphi(t)dN_\Im(|y|)\times dt=\int\{\int
e^{-ty}\varphi(t)dN_\Im(|y|)\}dt= \int\{\int
e^{-ty}\varphi(t)dt\}dN_\Im(|y|).
\end{equation}
Using Paley-Wiener's theorem, say, H\"{o}rmander \cite{Hormander},
\begin{eqnarray}\nonumber
&&|\int_{|\Re\sigma|<|\Im\sigma|}\int_0^\infty
e^{-ty}\varphi(t)dtdN_\Im(|y|)|\\\nonumber
&\leq&\int_{|\Re\sigma|<|\Im\sigma|}|\int_0^\infty
e^{-ty}\varphi(t)dt|dN_\Im(|y|))\\
&\leq&\int_{|\Re\sigma|<|\Im\sigma|} C_N(1+|y|)^{-N}e^{H(\Im(-iy))}dN_\Im(|y|),\forall N\in\mathbb{N},\nonumber\\
&=&\int_{|\Re\sigma|<|\Im\sigma|}
C_N(1+|y|)^{-N}e^{H(-y)}dN_\Im(|y|),\forall
N\in\mathbb{N},\label{2.28}
\end{eqnarray}
where $H$ is the related supporting function of $\varphi(t)$ and $C_N$ is a constant.
Without loss of generality, we assume $y>0$. Hence, the
integral~(\ref{2.28}) is convergent for large $N$ given the estimate of the growth order of $N_1(r)$ from~(\ref{119}) and~(\ref{110}). In particular,
\begin{equation}
e^{it\sigma}\varphi(t)\in L^1(dN_\Im(|y|)\times dt).
\end{equation}
Accordingly, Fubini's theorem gives
\begin{equation}
\int_0^\infty\int_{|\Re\sigma|<|\Im\sigma|}
(e^{it\sigma}-e^{-ty})\varphi(t)dN_\Im(|y|)dt=
\int_{|\Re\sigma|<|\Im\sigma|}\int_0^\infty
(e^{it\sigma}-e^{-ty})\varphi(t)dtdN_\Im(|y|).
\end{equation}
Now, let $\varphi(t)\in\mathcal{C}^\infty_0(\mathbb{R}^+;[0,1])$
such that $\int\varphi(t)dt=1$. We define
$\varphi_\gamma(t):=\frac{1}{\gamma}\varphi(t/\gamma)$, where
$\gamma>0$. To prove~(2.23), we show
\begin{eqnarray}\nonumber
&&|\int_{|\Re\sigma|<|\Im\sigma|}\int_0^\infty
(e^{it\sigma}-e^{-ty})\varphi_\gamma(t)dtdN_\Im(|y|)|,\mbox{
assuming } y>0,\\\nonumber
&\leq&\int_{|\Re\sigma|<|\Im\sigma|}\int_0^\infty
e^{-ty}|e^{itx}-1|\varphi_\gamma(t)dtdN_\Im(|y|)\\\nonumber
&=&\int_{|\Re\sigma|<|\Im\sigma|}\int_0^\infty
e^{-ty}|\sum_{j\geq1}\frac{(itx)^j}{j!}|\varphi_\gamma(t)dtdN_\Im(|y|)\\\nonumber
&\leq&\int_{|\Re\sigma|<|\Im\sigma|}\int_0^\infty
e^{-ty}\sum_{j\geq1}\frac{|tx|^j}{j!}\varphi_\gamma(t)dtdN_\Im(|y|),
\mbox{ given }|y|>|x|,\\\nonumber
&<&\int_{|\Re\sigma|<|\Im\sigma|}\int_0^\infty
e^{-ty}\sum_{j\geq1}\frac{(ty)^j}{j!}\varphi_\gamma(t)dtdN_\Im(|y|)\\\nonumber
&=&\int_{|\Re\sigma|<|\Im\sigma|}\int_0^\infty
e^{-ty}(e^{ty}-1)\varphi_\gamma(t)dtdN_\Im(|y|),\mbox{ letting
}t:=\gamma s,
\\
&=&\int_{|\Re\sigma|<|\Im\sigma|}\int_0^\infty e^{-\gamma s
y}(e^{\gamma s y}-1)\varphi(s)dsdN_\Im(|y|),
\end{eqnarray}
to which we apply the Lebesgue's dominated convergence theorem,
converging to $0$ as $\gamma\rightarrow0$.

The other identity follows similarly. $\Box$
\end{proof}
Using this lemma,
\begin{equation}
\int_\mathcal{Z}
e^{it\sigma}dN_1(r(\sigma))\rightarrow\int_{|\Re\sigma|\geq|\Im\sigma|}
e^{itx}dN_\Re(|x|)+\int_{|\Re\sigma|<|\Im\sigma|}
e^{-ty}dN_\Im(|y|),\mbox{ in }\mathcal{D}'(\mathbb{R}^+),\mbox{ as
}t\rightarrow0^+.
\end{equation}
For the first integral on $N_\Re(|x|)$, we set the change of
variable
\begin{equation}
\sigma:=x+iy\mapsto y+ix:=\sigma'.
\end{equation}
Let $\sigma':=x'+iy'$. We see
\begin{equation}
N_\Re(r(\sigma))=N_\Re(r(\sigma'))=N_\Re(r(y+ix))=N_\Re(\max\{|x|,|y|\})=N_\Re(|y'|).
\end{equation}
Since $\sigma'$ is a dummy variable,
\begin{equation}\label{2.34}
\int_\mathcal{Z}
e^{it\sigma}dN_1(r(\sigma))\rightarrow\int_{\mathbb{R^+}}
e^{-ty}d\{N_\Im(|y|)+N_\Re(|y|)\},\mbox{ in
}\mathcal{D}'(\mathbb{R}^+),\mbox{ as }t\rightarrow0^+.
\end{equation}
Please refer to the picture Figure 1 below for the counting
measure.

\begin{figure}\label{figure 1}\includegraphics[scale=0.6]{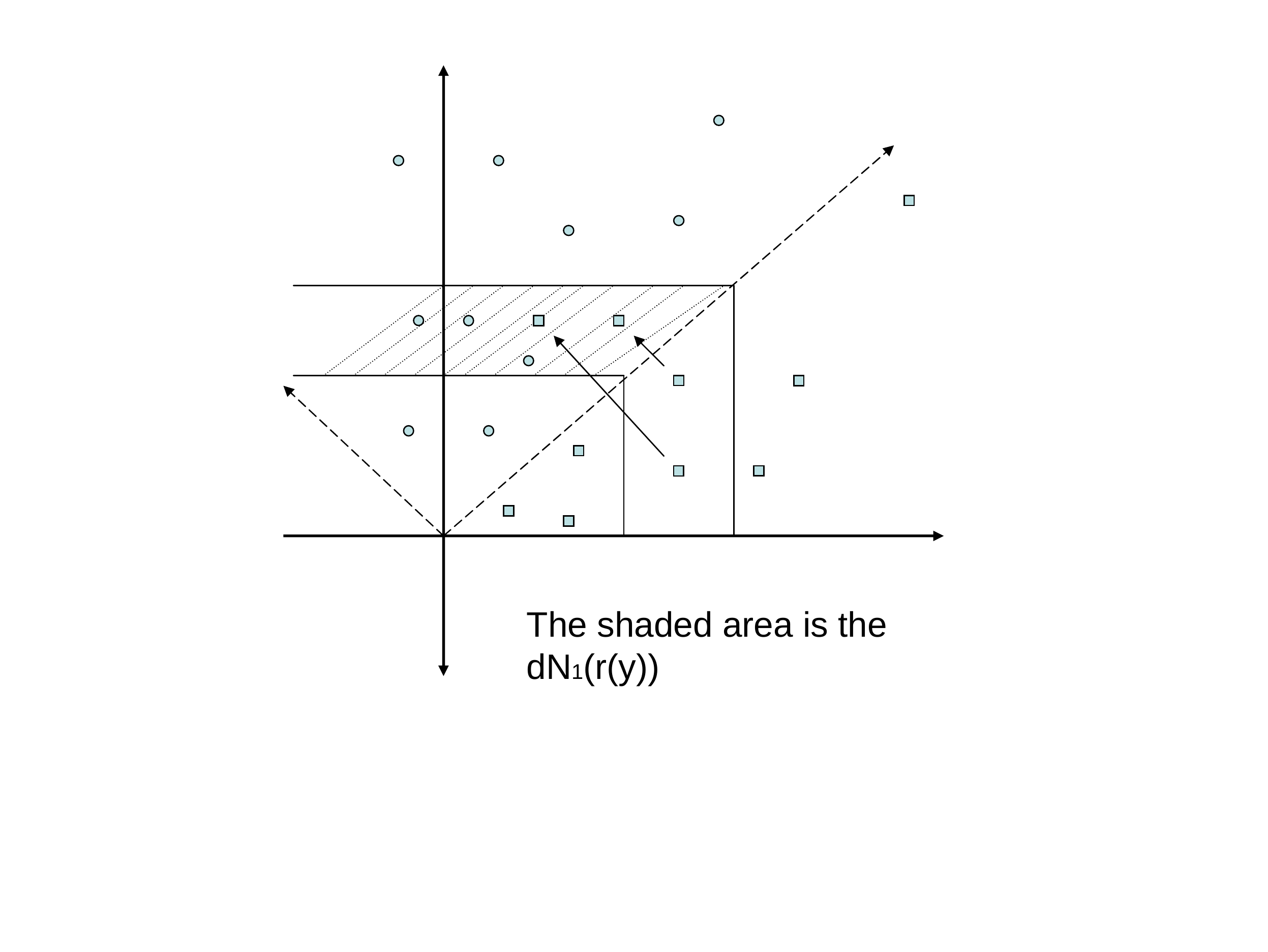}\caption{the counting measure in active}\end{figure}
Now we recall the Karamata's Tauberian theorem. As stated in
Taylor's book \cite{Taylor},
\begin{proposition}
If $\mu$ is a positive measure on $[0,\infty)$,
$\alpha\in(0,\infty)$, then
\begin{equation}\label{2.16}
\int_0^\infty e^{-t\lambda}d\mu(\lambda)\sim at^{-\alpha},
\hspace{2pt}t\searrow0,
\end{equation}
implies
\begin{equation}
\int_0^r d\mu(\lambda)\sim br^{\alpha},
\hspace{2pt}r\nearrow\infty, \mbox{ with }
b=\frac{a}{\Gamma(\alpha+1)}.
\end{equation}
\end{proposition}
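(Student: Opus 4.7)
The plan is to follow Karamata's classical approach: rescale the measure $\mu$, reinterpret the Laplace-transform hypothesis as a weak convergence statement against the family of exponentials, and then upgrade that convergence via the Weierstrass approximation theorem so as to evaluate it on the indicator of an interval. First I would introduce rescaled measures $\mu_t$ defined by $\mu_t([0,r]):=t^\alpha\mu([0,r/t])$, so that for every fixed $s>0$,
\begin{equation*}
\int_0^\infty e^{-s\lambda}\,d\mu_t(\lambda)=t^\alpha\int_0^\infty e^{-st\lambda}\,d\mu(\lambda)\longrightarrow as^{-\alpha}=\int_0^\infty e^{-s\lambda}\,\frac{a\lambda^{\alpha-1}}{\Gamma(\alpha)}\,d\lambda,
\end{equation*}
by the hypothesis and the standard identity $\int_0^\infty e^{-s\lambda}\lambda^{\alpha-1}\,d\lambda=s^{-\alpha}\Gamma(\alpha)$. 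This identifies the candidate limit measure as $d\mu_\infty(\lambda):=\frac{a}{\Gamma(\alpha)}\lambda^{\alpha-1}\,d\lambda$.

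Next I would promote this convergence on exponentials to convergence on a wider class of test functions. Setting $x:=e^{-\lambda}$ identifies $[0,\infty)$ with $(0,1]$, and finite linear combinations of $\{e^{-s\lambda}\}_{s\geq 1}$ correspond precisely to polynomials in $x$ with no constant term, which by Weierstrass are uniformly dense in $\{f\in C([0,1]):f(0)=0\}$. Using the weight $e^{-\lambda}$ (which appears as a factor in every such approximant) to dominate the tails of $\mu_t$ uniformly in $t>0$---the uniform bound $\int_0^\infty e^{-\lambda}\,d\mu_t(\lambda)\leq C$ being the hypothesis at $s=1$---I would conclude
\begin{equation*}
\int_0^\infty f(e^{-\lambda})\,d\mu_t(\lambda)\longrightarrow\int_0^\infty f(e^{-\lambda})\,d\mu_\infty(\lambda)
\end{equation*}
for every $f\in C([0,1])$ with $f(0)=0$. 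Finally I would sandwich the discontinuous indicator $\chi_{[0,1]}(\lambda)=\chi_{[e^{-1},1]}(e^{-\lambda})$ between two continuous functions of the above form and squeeze them together to obtain
\begin{equation*}
\mu_t([0,1])=t^\alpha\mu([0,1/t])\longrightarrow \mu_\infty([0,1])=\frac{a}{\alpha\Gamma(\alpha)}=\frac{a}{\Gamma(\alpha+1)}.
\end{equation*}
Setting $r:=1/t$ and observing that $r\nearrow\infty$ iff $t\searrow 0$ yields $\int_0^r d\mu(\lambda)\sim br^\alpha$ with $b=a/\Gamma(\alpha+1)$, as asserted.

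The main obstacle is the tail control underlying the second step: since $[0,\infty)$ is not compact, a naive invocation of Weierstrass fails, and one has to exploit that every polynomial approximation, rewritten in the variable $\lambda$, carries an overall factor of $e^{-\lambda}$ that can be paired with the uniform Laplace-transform bound to justify exchanging limits with integration. Once this is in place the remaining sandwich step is routine, since the limit density $\frac{a}{\Gamma(\alpha)}\lambda^{\alpha-1}$ assigns no mass to the single boundary point $\lambda=1$ and therefore the upper and lower continuous approximants to $\chi_{[0,1]}$ can be brought arbitrarily close in the $\mu_\infty$-norm.
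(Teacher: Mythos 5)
The paper does not supply a proof of this proposition; it is simply recalled from Taylor's book as Karamata's Tauberian theorem. Your proposal therefore cannot be compared against a proof in the paper, but it can be assessed on its own terms, and the overall strategy you outline is indeed the standard Karamata argument: rescale, interpret the hypothesis as weak convergence of $\mu_t$ to $\mu_\infty$ against exponentials, upgrade via Weierstrass, and finish by sandwiching an indicator.

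There is, however, a genuine gap in the middle step. The claim that $\int_0^\infty f(e^{-\lambda})\,d\mu_t(\lambda)\to\int_0^\infty f(e^{-\lambda})\,d\mu_\infty(\lambda)$ for \emph{every} $f\in C([0,1])$ with $f(0)=0$ does not follow from your argument, and it is in fact false in general. Both $\mu_t$ and $\mu_\infty$ have infinite total mass (the hypothesis forces $\mu([0,\infty))=\lim_{t\searrow0}\int e^{-t\lambda}d\mu=\infty$), so knowing only $\|f-p\|_\infty<\epsilon$ for a polynomial $p$ with $p(0)=0$ gives no control over $\int|f-p|(e^{-\lambda})\,d\mu_t$: the difference $f-p$ is uniformly small but need not decay like $e^{-\lambda}$, and the explicit factor $e^{-\lambda}$ you point to lives in the approximant $p$, not in the error $f-p$. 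Concretely, for $f(x)=1/\ln(e/x)$ one has $f(e^{-\lambda})=1/(1+\lambda)$ and $\int_0^\infty\frac{\lambda^{\alpha-1}}{1+\lambda}\,d\lambda=\infty$ when $\alpha\geq1$, so the target integral does not even exist. The correct intermediate statement is the weighted one: $\int_0^\infty g(e^{-\lambda})\,e^{-\lambda}\,d\mu_t(\lambda)\to\int_0^\infty g(e^{-\lambda})\,e^{-\lambda}\,d\mu_\infty(\lambda)$ for all $g\in C([0,1])$, where now the Weierstrass error $\|g-q\|_\infty$ is multiplied by the uniformly bounded mass $\int e^{-\lambda}d\mu_t\leq C$. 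Correspondingly, the sandwiching functions for $\chi_{[e^{-1},1]}$ must be chosen supported away from $x=0$ (which is possible since the indicator is supported on $x\geq e^{-1}$) so that they factor as $x\,g_\pm(x)$ with $g_\pm\in C([0,1])$ and the weighted convergence applies. With these two corrections the remainder of your argument is sound and yields the stated conclusion.
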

In our case, $a=(2\pi)^{-n}\omega_n{\rm Vol}(\mathcal{O})$ and
$\alpha=n$. By construction, $N_\Im(|y|)+N_\Re(|y|)$ is a
nondecreasing function of $|y|$, so~(\ref{2.17}),~(\ref{2.34})
and~(\ref{2.16}) yield
\begin{equation}
N_1(r)=\frac{(2\pi)^{-n}\omega_n{\rm
Vol}(\mathcal{O})}{\Gamma(n+1)}\{1+o(\frac{1}{r})\}r^n,\mbox{ as
}r\rightarrow\infty.
\end{equation}
This proves the main theorem.

\begin{acknowledgement}
The author wants to thank many mathematicians for the critique on the previous version of this work and Prof.
Tzy-Wei Hwang for reading through the manuscript.
\end{acknowledgement}

\end{document}